\documentclass[11pt]{amsart}
\usepackage{latexsym}
\usepackage{amsfonts}
\usepackage{xcolor}
\usepackage{graphicx}

\setlength{\textwidth}{150mm}
\setlength{\textheight}{230mm}
\setlength{\oddsidemargin}{.25in}
\setlength{\evensidemargin}{.25in}
\setlength{\topmargin}{-0.2cm}
\setlength{\parskip}{.05in}
\setlength{\hoffset}{-0.4cm}
\setlength{\headheight}{12pt}
\setlength{\headsep}{25pt}

\newcommand{\field}[1]{\mathbb{#1}}
\newcommand{\C}{\field{C}}

\newtheorem{defi}{Definition}[section]
\newtheorem{lem}[defi]{Lemma}

\newtheorem{theo}[defi]{Theorem}
\newtheorem{co}[defi]{Corollary}
\newtheorem{pr}[defi]{Proposition}
\newtheorem{re}[defi]{Remark}
\newtheorem{ex}[defi]{Example}

\font\tenmsy=msbm10

\def\Bbb#1{\hbox{\tenmsy#1}} 
\setcounter{section}{0}

\title[On algebraic bi-Lipschitz homeomorphisms ]{On algebraic  bi-Lipschitz homeomorphisms  }
\makeatletter

\@addtoreset{equation}{section}
\makeatother

\author{Zbigniew Jelonek}
\address[Zbigniew  Jelonek]{Instytut Matematyczny\\
Polska Akademia Nauk\\
\'Sniadeckich 8, 00-656 Warszawa\\
Poland }
\email{najelone@cyf-kr.edu.pl}

\keywords{degree of algebraic variety, bi-Lipschitz homeomorphism}

\subjclass{14 R 10, 14 P 15, 14 P 10}
\thanks{The author is partially supported by the grant of Narodowe Centrum Nauki number 2019/33/B/ST1/00755}

\date{}

\begin{document}


\begin{abstract}

Let $X\subset \C^n; Y\subset \C^m$ be closed affine varieties and 
let $\phi: X\to Y$ be an algebraic bi-Lipschitz homeomorphism. Then ${\rm deg}\ X={\rm deg}\ Y.$
Similarly, let $(X,0)\subset (\C^n,0), (Y,0)\subset (\C^m,0)$ be germs of analytic  sets and let $f: (X,0)\to (Y,0)$ be a c-holomorphic  and bi-Lipschitz homeomorphism.  Then
${\rm mult}_0 \ X= {\rm mult }_0 \ Y.$ Finally we show that the normality is not a bi-Lipschitz invariant.
\end{abstract}

\bibliographystyle{alpha}

\maketitle

\section{Introduction}
In \cite{bfs} Bobadilla, Fernandes and Sampaio study invariance of degree of complex affine varieties under bi-Lipschitz homeomorphisms. They proved that this problem is equivalent to the bi-Lipschitz version of the Zariski multiplicity conjecture. Moreover, they proved that the degree of curves and surfaces are such invariants. From other point of view in \cite{bfsv} Birbrair, Fernandes, Sampaio and Verbitsky give an example of two three-dimensional affine varieties, which are bi-Lipschitz equivalent but they have different degrees. In fact they showed that we have two different embeddings of   $\C\Bbb P^1\times \C\Bbb P^1$ into $\C\Bbb P^5$, say $X$ and $Y$, such that affine cones $C(X),C(Y)\subset \C^6$ are bi-Lipschitz equivalent, but they have different degrees.  Hence in general the algebraic degree of an affine set is not a bi-Lipschitz invariant. However varieties $X,Y$ of  Birbrair, Fernandes, Sampaio and Verbitsky have codimension greater than one. Hence the problem of invariance of degree under bi-Lipschitz homeomorphisms is still open in the important case of  affine hypersurfaces in $\C^n$, where $n>3.$ 

In this paper we introduce a new class of bi-Lipschitz homeomorphism- the algebraic bi-Lipschitz homeomorphisms. Assume that $X,Y$ are affine varieties.  We say that homeomorphism  $f:X\to Y$ is an algebraic bi-Lipschitz homeomorphism if it is a bi-Lipschitz  and its graph is an algebraic variety. Inspired by our recent paper \cite{bfj} (with L. Birbrair and A. Fernandes) we show that this class of mappings preserves the degree of  affine varieties:

\vspace{5mm}

\noindent {\bf Theorem 3.4}
{\it Let $X\subset \C^n,Y\subset \C^m$ be affine algebraic varieties and let $f: X\to Y$ be an algebraic bi-Lipschitz homeomorphism. Then
${\rm deg} \ X= {\rm deg }\ Y.$}

In a similar way we can prove:

\noindent {\bf Theorem 4.1}
{\it Let $(X,0)\subset (\C^n,0), (Y,0)\subset (\C^m,0)$ be germs of analytic  sets and let $f: (X,0)\to (Y,0)$ be a c-holomorphic  and bi-Lipschitz homeomorphism.  Then
${\rm mult}_0 \ X= {\rm mult }_0 \ Y.$}

Let us recall that a mapping $f:X\to Y$ is c-holomorphic, if it is continuous and its graph is analytic in $X\times Y$ (here $X,Y$ are analytic sets).

\noindent {\bf Corollary 4.2}
{\it Let $X\subset \C^n,Y\subset \C^m$ be affine algebraic varieties and let $f: X\to Y$ be an algebraic bi-Lipschitz homeomorphism. Then
${\rm deg} \ X= {\rm deg }\ Y$ and for every $x\in X$ we have ${\rm mult}_x \ X= {\rm mult }_{f(x)} \ Y.$}

At the end of this paper we give an example of two algebraic affine varieties $X,Y$ such that $X$ is normal, $Y$ is not normal and there exists an algebraic bi-Lipschitz homeomorphism $f: X\to Y.$ Hence the normality is not a bi-Lipschitz invariant.

\section{Preliminaries}

\begin{defi}
Let $X,Y$ be affine complex varieties and let $f: X\to Y$ be a continuous mapping. We say that $f$ {\rm is algebraic} if the graph of 
$f$ is a complex algebraic set. 
\end{defi}

\begin{pr}\label{alg}
Every algebraic mapping is rational, i.e., there exists a Zariski open subset $U\subset X$ such that
the mapping $f|_{U}$ is a regular mapping. 
\end{pr}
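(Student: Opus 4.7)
The plan is to exploit the fact that the graph $\Gamma_f \subset X \times Y$ is a complex algebraic set, and that the first projection $\pi_X : \Gamma_f \to X$, being a regular morphism, is a bijection on $\C$-points (surjective because $f$ is defined on all of $X$, injective because $f$ is a function). First I would reduce to the case where $X$ is irreducible: if $X_1,\ldots,X_s$ are the irreducible components of $X$, then $\Gamma_{f|X_i} = \Gamma_f \cap (X_i \times Y)$ is algebraic, so each $f|_{X_i}$ is algebraic, and taking the union of the regular loci yields a Zariski open subset of $X$ on which $f$ is regular.

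Assuming $X$ irreducible of dimension $d$, I would decompose $\Gamma_f = Z_1 \cup \cdots \cup Z_k$ into irreducible components. Since $\pi_X$ has zero-dimensional fibers (singletons) and is surjective, $\dim \Gamma_f = d$, so some $Z_j$ has dimension $d$; after reindexing, $\pi_X(Z_1)$ is a constructible subset of dimension $d$ in the irreducible variety $X$, hence Zariski dense, so $\pi_X|_{Z_1} : Z_1 \to X$ is a dominant morphism between irreducible varieties of the same dimension. The key step is to conclude that this dominant morphism is birational. Since $\pi_X|_{Z_1}$ is injective on $\C$-points and we are in characteristic zero, generic smoothness (or equivalently Zariski's Main Theorem combined with generic flatness) forces the generic fibre to be a reduced single point, so the extension $\C(X) \hookrightarrow \C(Z_1)$ is an isomorphism. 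This is the main subtlety, as injectivity of a morphism does not in general imply birationality in positive characteristic or without an irreducibility/dimension matching argument.

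Once birationality is in hand, there exists a dense Zariski open $U_0 \subset X$ on which $\pi_X|_{Z_1}$ is an isomorphism onto $U_0$, with regular inverse $\sigma : U_0 \to Z_1 \subset X \times Y$. To guarantee that the unique point of $\Gamma_f$ over a generic $x \in X$ lies in $Z_1$ (and not in some other component $Z_j$), I would shrink further: for $j \neq 1$, $\pi_X(Z_j)$ is constructible, and if it were Zariski dense in $X$ then injectivity of $\pi_X$ together with $\dim \pi_X(Z_1 \cap Z_j) < d$ would give a contradiction, so each $\overline{\pi_X(Z_j)}$ is a proper closed subvariety of $X$. Setting $U := U_0 \setminus \bigcup_{j \neq 1} \overline{\pi_X(Z_j)}$, we obtain a Zariski open set on which $(x,f(x)) = \sigma(x)$, whence $f|_U = \pi_Y \circ \sigma$ is a regular mapping, as required.
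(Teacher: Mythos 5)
Your proof is correct, and it takes a genuinely different route from the paper's. The paper argues in one stroke: it restricts to the smooth (hence normal) locus $U$ of $X$ and invokes the Zariski Main Theorem to conclude that the projection $\pi:\Gamma\to X$ is an isomorphism over $U$, so that $\iota=\pi^{-1}$ and therefore $f|_U=\pi_Y\circ\iota|_U$ are regular there. You instead avoid ZMT entirely: you locate a component $Z_1$ of the graph dominating $X$ by a dimension count, deduce birationality of $\pi_X|_{Z_1}$ from set-theoretic injectivity via separability in characteristic zero, and then use the generic isomorphism locus of a birational morphism. What you correctly identify as ``the main subtlety'' --- that injectivity of a dominant morphism forces degree one only thanks to characteristic zero --- is exactly the point the paper leaves implicit in its appeal to ZMT (which also needs birationality, or a bijectivity-implies-birationality step, as input). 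Your argument is more self-contained and elementary; the paper's is shorter and yields the stronger conclusion that $f$ is regular on the entire smooth locus rather than on some unspecified dense open set. Two small remarks: the final shrinking $U:=U_0\setminus\bigcup_{j\neq 1}\overline{\pi_X(Z_j)}$ is actually unnecessary, since $\pi_X$ is bijective on all of $\Gamma_f$, so for any $x\in U_0$ the point $\sigma(x)\in Z_1\subset\Gamma_f$ must already equal $(x,f(x))$; and in the reduction to irreducible $X$ you should delete from each $U_i\subset X_i$ the other components $X_j$, $j\neq i$, so that the union is genuinely open in $X$. Neither affects the validity of the proof.
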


\begin{proof}
Indeed, let $\Gamma$ be a graph of $f$ and $\iota : X\ni x \mapsto (x,f(x))\in \Gamma.$
Let $U\subset X$ be the set of smooth points of $X.$ Then $U$ is a Zariski open dense subset of $X.$ By the Zariski Main Theorem the projection
$\pi : \Gamma \to X$ is an isomorphism over $U.$ This means that $\iota$ is an isomorphism on $U.$ But $f|U=\pi\circ\iota|_U,$ hence $f$ is regular on $U.$ 
\end{proof}

In other words an algebraic mapping is a mapping which additionally is rational. Here we are interested in these algebraic mappings, which are additionally bi-Lipschitz. Note that if a mapping $f: X\to Y$ is an algebraic bi-Lipschitz homeomorphism, then the mapping $f^{-1}: Y\to X$ is also an algebraic bi-Lipschitz homeomorphism. First of all let us recall the following statement, which follows from \cite{bfj}:

\begin{theo}\label{proj}
Let $X\subset \C^n$ be a $k-$dimensional affine variety. Then there is an algebraic bi-Lipschitz embedding $f: X\to\C^{2k+1}.$
\end{theo}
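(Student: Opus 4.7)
The plan is to realize $f$ as a generic linear projection $\C^n \to \C^{2k+1}$. If $n \le 2k+1$ the inclusion is already an algebraic bi-Lipschitz embedding, so assume $n > 2k+1$. The crucial object is the \emph{secant direction variety}
$$T(X) := \overline{\{[x-y] \in \Bbb P^{n-1} : x,y \in X,\ x \ne y\}} \subset \Bbb P^{n-1},$$
which, being the closure of the image of the map $(X \times X) \setminus \Delta_X \to \Bbb P^{n-1}$, $(x,y) \mapsto [x-y]$, has $\dim_\C T(X) \le 2k$. I would then choose a generic linear surjection $\pi:\C^n \to \C^{2k+1}$: its kernel $\ker\pi$ has complex dimension $n-2k-1$, so $\Bbb P(\ker\pi) \subset \Bbb P^{n-1}$ has dimension $n-2k-2$. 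Since $(n-2k-2)+2k = n-2 < n-1$, a generic $\pi$ satisfies $\Bbb P(\ker\pi) \cap T(X) = \emptyset$.

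For such $\pi$, the restriction $f := \pi|_X$ is Lipschitz since $\pi$ is linear. The reverse inequality follows by compactness: both $\Bbb P(\ker\pi)$ and $T(X)$ are closed in $\Bbb P^{n-1}$ and disjoint, so at positive Fubini--Study distance. Arguing by contradiction, if no uniform constant worked, one would find $x_m \ne y_m$ in $X$ with $\|\pi(x_m-y_m)\|/\|x_m-y_m\| \to 0$; extracting a convergent subsequence of unit directions $v_m := (x_m-y_m)/\|x_m-y_m\| \to v \in S^{2n-1}$ gives $\|v\| = 1$ and $\pi(v) = 0$, so $[v] \in \Bbb P(\ker\pi)$, while $[v_m] \in T(X)$ for all $m$ forces $[v] \in T(X)$. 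This contradicts $\Bbb P(\ker\pi) \cap T(X) = \emptyset$, so $f$ is bi-Lipschitz.

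Algebraicity is then automatic: as $\pi$ is linear, the graph $\Gamma_f = \{(x,\pi(x)) : x \in X\}$ is the image of $X$ under a regular morphism, hence a Zariski-closed subset of $X \times \C^{2k+1}$, and $f$ satisfies Definition~2.1. Combined with injectivity and the bi-Lipschitz property this yields the sought algebraic bi-Lipschitz embedding. I expect the main obstacle to be the passage from generic set-theoretic disjointness to the quantitative Lipschitz bound; the key input is that $T(X)$, although it may include tangent-direction limits when $X$ is singular, is a closed subset of $\Bbb P^{n-1}$ of complex dimension at most $2k$, so the generic dimension count forces the intersection with $\Bbb P(\ker\pi)$ to be empty and compactness then supplies a positive separation.
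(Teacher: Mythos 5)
Your proof is correct and follows essentially the same route the paper relies on: the paper states this theorem without proof, deferring to the reference \cite{bfj}, but its own Lemma \ref{lemat} is exactly your criterion (a linear projection restricted to a closed set is bi-Lipschitz iff its center misses the closure $\Sigma$ of the secant directions), and your dimension count $\dim T(X)\le 2k$ plus the sequential compactness argument on unit secant directions is the standard argument behind both that lemma and the cited result. The only cosmetic difference is that the paper's Lemma \ref{lemat} factors the projection into a chain of projections from single points and inducts, whereas you treat the full projection in one step; both are valid.
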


We say that the mapping $f: X \to \C^n$ is an algebraic bi-Lipschitz embedding, if the mapping $f: X\to f(X)$ is algebraic and bi-Lipschitz. 
Since not every $k$ dimensional affine variety can be embedded into $\C^{2k+1}$ in a bi-regular way (see example below) we see that we have a lot of algebraic bi-Lipschitz homeomorphisms which are not biregular mappings.

\begin{ex}
{\rm For $m>3$ let $X\subset \C^{m}$ be a curve given by parametrization $$X=\{ x\in \C^m: x=(t^m,...., t^{2m-1}); \ t\in \C\}.$$ It is an easy observation that the Zariski tangent space $T_0 X$ coincide with $\C^m.$ Now consider a generic linear projection $\pi: X\to \C^3.$ By \cite{bfj}  it is an algebraic bi-Lipschitz embedding.
Since $T_0(\pi(X))\subset \C^3$ and $T_0 X=\C^m$ we see that the mapping $\pi$ is not bi-regular. In fact the algebraic bi-Lipschitz mapping $\pi^{-1}: \pi(X)\to X$ can not be extended to any $C^1$ mapping in a neighborhood of $0$ in $\C^3$.}
\end{ex}

\begin{ex}
{\rm Let $X=\{ x\in \C^3 : x=(t, t^3+t^2,t^5) ; t\in \C\}$ and $Y=\{x\in\C^3 : x=(t, t^3+2t^2,t^5); t\in \C\}.$ Let $\phi : X\ni (t, t^3+t^2,t^5)\mapsto (t, t^3+2t^2,t^5)\in Y.$ Then $\phi$ is bi-regular and bi-Lipschitz but it is not a linear mapping.

It is bi-regular because $\phi(x,y,z)=(x,y+x^2,z).$ Now 
we show that $\phi$ is bi-Lipschitz outside some big ball. Let $a(t)=(t, t^3+t^2,t^5), b(t)=(t, t^3+2t^2,t^5).$ Hence $\phi(a(t))=b(t)$. We have to show that for some $K>0$ $$\frac{1}{K}||a(t)-a(s)||<||b(t)-b(s)||< K||a(t)-a(s)||.$$ Consider the fraction $$(*)\ \frac{b_2(t)-b_2(s))}{a_2(t)-a_2(s)}=\frac{t^3+2t^2-s^3-2s^2}{t^3+t^2-s^3-s^2}=\frac{(1-\epsilon_1\alpha)(1-\epsilon_2\alpha)+2/t(1+\alpha)}{(1-\epsilon_1\alpha)(1-\epsilon_2\alpha)+1/t(1+\alpha)},$$ where $1,\epsilon_1,\epsilon_2$ are all roots of polynomial $x^3+1$ and $\alpha=s/t.$ Note that we can always assume that $|\alpha|\le 1.$ Denote  $g(\alpha)=(1-\epsilon_1\alpha)(1-\epsilon_2\alpha).$ This polynomial has roots in $\epsilon_1,\epsilon_2$ only. Denote by  $U=\{ \alpha: |\alpha-\epsilon_1|>r\} \cap \{\alpha: |\alpha-\epsilon_2|>r\}.$ Since $U$ does not contain roots of $g$, we have  $|g(\alpha)|>\rho>0.$
We can assume that $r$ is so small that in the set $V:=\{ \alpha: |\alpha-\epsilon_1|\le r\}\cup \{\alpha: |\alpha-\epsilon_2|\le r\}$ there is no roots of polynomials $x^5+1.$ Now we estimate $(*)$. We have two possibilities: 

a) $\alpha\in U$,

b) $\alpha\in V.$

\noindent In the case a) we have $|g(\alpha)| > \rho>0$ and in particular for $|t|>R$ we have 
$$\frac{||b_2(t)-b_2(s))||}{||a_2(t)-a_2(s)||}<2.$$
Hence $$||b_2(t)-b_2(s))||<2||a_2(t)-a_2(s)||.$$

In the case b) we have $||\phi(a(t))-\phi(b(t))||=||b_3(t)-b_3(s)||=||a_3(t)-a_3(s))||=||a(t)-a(s)||$ for large $|t|$ (we consider here the "sup" norm). Hence indeed the mapping $\phi$ is Lipschitz outside a large ball. Similarly $\phi^{-1}$ is Lipschitz outside a large ball.
Hence $\phi$ is bi-Lipschitz outside a large ball.

On the other hand $\phi$ is bi-Lipschitz in any ball, because it is a smooth mapping. Combining this fact with the first step of our proof we see that we can reduce the general case to the case where $|\alpha|$ is small and $|t|$ is large. But this can be done in a similar way as in the first step (we left details to the reader). Hence finally the mapping $\phi$ is bi-Lipschitz.

The mapping $\phi$ is not a restriction of a linear mapping because otherwise $t^3+2t^2=at+b(t^3+t^2)+ct^5+d$, where $a,b,c,d\in \C.$ This is impossible.}
\end{ex}

 \section{Proof of the  Theorem 3.4}

\begin{defi}
Let $L^s, H^{n-s-1}$ be two disjoint linear subspaces of $\Bbb CP^n.$ Let $\pi_\infty$ be a hyperplane (a hyperplane at infinty) and assume that
$L^s \subset \pi_\infty.$ By a projection $\pi_L$ with center $L^s$ we mean the mapping:
$$ \pi_L : \C^n=\C\Bbb P^n\setminus \pi_\infty\ni x \mapsto <L^s,x> \cap H^{n-s-1}\in H^{n-s-1}\setminus \pi_\infty=\C^{n-s-1}.$$ Here by $<L,x>$
we mean a linear subspace spanned by $L$ and $\{x\}.$
\end{defi}

\begin{lem}\label{lemat}
Let $X$ be a closed subset of $\C^n.$ Denote by $\Lambda\subset \pi_\infty$ the set of directions of all secants of $X$ and let $\Sigma= \overline{\Lambda},$
where $\pi_\infty$ is a hyperplane at infinity and we consider the projective closure. Let $\pi_L:\C^n\to \C^l$ be a projection with center $L$.
Then $\pi_L|_X$ is a bi-Lipschitz embedding if and only if $L\cap \Sigma=\emptyset.$
\end{lem}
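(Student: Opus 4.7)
The plan is to identify $\pi_L$ with a surjective linear map $p:\C^n\to\C^l$ (up to a fixed translation chosen by the affine chart $H^{n-s-1}$) whose kernel is the linear cone over $L\subset\pi_\infty\cong\C\Bbb P^{n-1}$, so that $\pi_L(x)-\pi_L(y)$ corresponds to $p(x-y)$. Introduce the continuous function $\phi:\C\Bbb P^{n-1}\setminus L\to\R_{>0}$ defined by $\phi([v])=\|p(v)\|/\|v\|$; this is well defined by homogeneity and extends continuously by $0$ to $L$. Both directions of the lemma then reduce to examining how $\Sigma$ sits with respect to the vanishing locus of $\phi$.

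Forward direction: if $L\cap\Sigma=\emptyset$, then $\Sigma$ is a closed subset of the compact space $\C\Bbb P^{n-1}$ disjoint from $L$, so $\phi$ attains a positive minimum $c>0$ on $\Sigma$. For any two distinct points $x,y\in X$, the secant direction $[x-y]$ lies in $\Lambda\subset\Sigma$, so $\|\pi_L(x)-\pi_L(y)\|=\|p(x-y)\|\ge c\|x-y\|$. The matching upper Lipschitz bound follows at once from linearity of $p$, giving the bi-Lipschitz property.

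Reverse direction: assume $L\cap\Sigma\ne\emptyset$. If $L$ actually meets $\Lambda$ itself, then there exist distinct $x,y\in X$ with $[x-y]\in L$, giving $p(x-y)=0$ and hence $\pi_L(x)=\pi_L(y)$, so $\pi_L|_X$ is not even injective. Otherwise pick $[v]\in L\cap(\Sigma\setminus\Lambda)$ and sequences $x_k\ne y_k$ in $X$ with $[x_k-y_k]\to[v]$; by continuity $\phi([x_k-y_k])\to 0$, so the ratio $\|\pi_L(x_k)-\pi_L(y_k)\|/\|x_k-y_k\|$ tends to $0$, ruling out any bi-Lipschitz lower bound.

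The only mildly delicate point is the dual role played by $L$, as a projective subspace of $\pi_\infty$ on the one hand, and as the vanishing locus of the direction function $\phi$ on the other; once this correspondence is set up, the argument is a routine compactness and continuity argument on $\C\Bbb P^{n-1}$.
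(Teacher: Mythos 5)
Your proof is correct, but it takes a more direct route than the paper. The paper factors $\pi_L$ as a composition of projections from single points and argues by induction on the codimension of the center: at each step it shows that projection from a point $P_1\notin\Sigma$ is bi-Lipschitz by a compactness/contradiction argument (if the Lipschitz ratio degenerated along a sequence of secants, the normalized secant directions would converge to $P_1$, forcing $P_1\in\Sigma$ since $\Sigma$ is closed), and then checks that the image of $\Sigma$ under that projection is again disjoint from the residual center. You instead treat the whole projection at once via the distortion function $\phi([v])=\|p(v)\|/\|v\|$ on $\C\Bbb P^{n-1}$, which vanishes exactly on $L$, and extract a uniform lower bound $c>0$ on the compact set $\Sigma$. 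This buys a shorter argument with an explicit bi-Lipschitz constant and no need to track how $\Sigma$ and $L$ transform under intermediate projections; the paper's decomposition, on the other hand, is the form it reuses later (the factorization $\pi=\pi_2\circ\pi_1$ reappears in part (b) and in the proof of Theorem 3.4). Your reverse direction is essentially the paper's (a sequence of secant directions converging to a point of $L$ kills the lower Lipschitz bound), and you additionally make explicit the degenerate case $L\cap\Lambda\neq\emptyset$, where injectivity already fails, which the paper passes over silently. Both arguments rest on the same two facts: $\Sigma$ is closed, and the Lipschitz defect of a linear projection is controlled by the distance from the secant direction to the center.
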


\begin{proof}
a) Assume that $L\cap \Sigma=\emptyset.$ We will proceed by induction.
Since a linear affine mapping is a bi-Lipschitz homeomorphism, we can assume that $\pi_L$ coincide with the projection
$\pi: \C^n\ni (x_1,...,x_n) \mapsto (x_1,...,x_k,0,...,0)\in \C^k\times \{0,...,0\}.$ We can decompose $\pi$ into two projections:
$\pi=\pi_{2}\circ \pi_{1}$, where $\pi_1:\C^n \ni (x_1,...,x_n)\mapsto (x_1,...,x_{n-1},0)=\C^{n-1}\times 0$ is a projection with a center $P_1=(0:0:...:1)$
and $\pi_2 :\C^{n-1}\ni (x_1,...,x_{n-1},0)\mapsto (x_1,...,x_{k},0,...,0)\in \C^{k}\times \{(0,...,0)\}$ is a projection with a center $L':=\{ x_0=0,...,x_{k}=0\}.$ Since $P_1\in L$ and consequently $P_1\not\in \Sigma$, we prove  that $\pi_1$ is an algebraic bi-Lipschitz homeomorphism.
Indeed, let $P_1\in \Bbb {CP}^{n-1}\setminus \Sigma$ and let $H\subset
\C^{n}$ be a hyperplane, such that $P\not\in H.$ Since a complex linear isomorphism is a bi-Lipschitz homeomorphism, we can assume that
$P_1=(0:0:...0:1)$ and $H=\{x_n=0\}.$ We show that the projection $p: X\to H$ with center at $P_1$ is an algebraic bi-Lipschitz homeomorphism.
Of course $||p(x)-p(y)||\le ||x-y||.$ Assume that $p$ is not bi-Lipschitz, i,e., there is a sequence of points $x_j,y_j\in X$
such that $$\frac{||p(x_j)-p(y_j)||}{||x_j-y_j||}\to 0,$$ as $n\to \infty.$ Let $x_j-y_j=(a_1(j),...,a_{n-1}(j),b(j))$ and denote by
$P_j$ the corresponding point $(a_1(j):...:a_{n-1}(j):b(j))$ in $\Bbb
{CP}^{n-1}.$ Hence $$P_j=\frac{(a_1(j):...:a_{n-1}(j):b(j))}{||x_j-y_j||}.$$ Since $\frac{(a_1(j),...,a_{n-1}(j))}{||x_j-y_j||}=
\frac{p(x_j)-p(y_j)}{||x_j-y_j||}\to 0$, we get that $P_j\to P.$ It is a contradiction.
Notice that if $\pi_1(X)=X'$, then $\Sigma'=\pi_1(\Sigma).$ Moreover $L'=L\cap \{ x_n=0\}$ and $<L', P_1>=L$. This means that $\Sigma'\cap L'=\emptyset.$ Now we can finish by induction.

\vspace{5mm}

b) Assume that $\pi_L|_X$ is a bi-Lipschitz map and $\Sigma \cap L\not=\emptyset.$ As before we can change a system of coordinates in such a way that
$\pi: \C^n\ni (x_1,...,x_n) \mapsto (x_1,...,x_k,0,...,0)\in \C^k\times \{0,...,0\}.$ Moreover, we can assume that $P_1=(0:0:...:1)\in \Sigma.$
Actually $\pi_1$ is not bi-Lipschitz. Indeed there is a sequence of secants $l_n=(x_n,y_n)$ of $X$ whose directions tends to $P_1.$
 Let $x_j-y_j=(a_1(j),...,a_{n-1}(j),b(j))$ and denote by
$P_j$ the corresponding point $(a_1(j):...:a_{n-1}(j):b(j))$ in $\Bbb
{CP}^{n-1}.$ Hence $$P_j=\frac{(a_1(j):...:a_{n-1}(j):b(j))}{||x_j-y_j||}.$$ Since  $P_j\to P$ we have $\frac{(a_1(j),...,a_{n-1}(j))}{||x_j-y_j||}=
\frac{p(x_j)-p(y_j)}{||x_j-y_j||}\to 0$. Hence the mapping $\pi_1$ is not bi-Lipschitz. 

 Now it is enough to note that $||\pi_2(x)-\pi_2(y)||\le ||x-y||,$ hence $||\pi(x_n)-\pi_(y_n)||=||\pi_2(\pi_1(x_n))-\pi_2(\pi_1(y_n))||\le ||\pi_1(x_n)-\pi_1(y_n)||$. Thus  $$\frac{||x_n-y_n||}{||\pi(x_n)-\pi(y_n)||}\ge \frac{||x_n-y_n||}{||\pi_1(x_n)-\pi_1(y_n)||}\to \infty.$$ This contradiction finishes the proof.
\end{proof}

\begin{lem}\label{wykres}
Let $X\subset \C^{n}$ be  a closed  set and let $f: X\to\C^m$ be an algebraic  Lipschitz homeomorphism. Let $Y:=graph(f)\subset \C^n\times \C^m.$
Then the mapping $\phi: X\ni x \mapsto (x,f(x)) \in Y$ is an algebraic  bi-Lipschitz homeomorphism.
\end{lem}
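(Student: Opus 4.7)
The plan is to verify the two required properties of $\phi$ separately, both of which are quite direct from the hypotheses.

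First I would check that $\phi$ is algebraic. The graph of $\phi$ sits in $X\times Y \subset \C^n\times(\C^n\times\C^m)$ and equals
\[
\mathrm{graph}(\phi)=\{(x,z,w)\in \C^n\times\C^n\times\C^m : x\in X,\ z=x,\ (z,w)\in Y\}.
\]
This is cut out by the linear equations $z=x$ together with the defining equations of $Y$. Since $Y=\mathrm{graph}(f)$ is algebraic by hypothesis, $\mathrm{graph}(\phi)$ is algebraic. Equivalently, $\mathrm{graph}(\phi)$ is the image of $Y$ under the regular embedding $(z,w)\mapsto(z,z,w)$.

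Next I would check the bi-Lipschitz estimate. For $x,y\in X$, using the product norm we have
\[
\|\phi(x)-\phi(y)\|^2=\|x-y\|^2+\|f(x)-f(y)\|^2.
\]
The lower bound $\|\phi(x)-\phi(y)\|\ge\|x-y\|$ is immediate. For the upper bound, if $L$ is a Lipschitz constant of $f$, then $\|f(x)-f(y)\|\le L\|x-y\|$, whence $\|\phi(x)-\phi(y)\|\le\sqrt{1+L^2}\,\|x-y\|$. This shows $\phi$ is bi-Lipschitz onto its image.

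Finally, $\phi$ is a bijection onto $Y$ by construction, with inverse the restriction of the first projection $\pi_1:Y\to X$, which is $1$-Lipschitz. Combining these observations gives that $\phi:X\to Y$ is an algebraic bi-Lipschitz homeomorphism. There is essentially no obstacle here; the statement is a packaging lemma that lets one replace $f$ by $\phi$ and thereby realize the homeomorphism via a projection from $Y$, which is precisely the setup needed to apply Lemma \ref{lemat} in the sequel.
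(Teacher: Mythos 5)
Your proof is correct and follows essentially the same route as the paper: verify algebraicity of the graph of $\phi$ and then establish the two-sided Lipschitz estimate (the paper gets the constant $1+C$ via the triangle inequality where you get $\sqrt{1+L^2}$ via the Euclidean product norm, an immaterial difference). Your direct description of $\mathrm{graph}(\phi)$ as the image of $Y$ under $(z,w)\mapsto(z,z,w)$ is a slightly more explicit justification of algebraicity than the paper's appeal to Proposition \ref{alg}, but the content is the same.
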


\begin{proof}
The mapping $\phi$ is algebraic by Proposition \ref{alg}.
Since $f$ is Lipschitz, there is a constant $C$ such that $$||f(x)-f(y)||<C||x-y||.$$
We have $$||\phi(x)-\phi(y)||=||(x-y,f(x)-f(y))||\le$$  $$ \le ||x-y||+||f(x)-f(y)||\le ||x-y||+C||x-y||\le (1+C)||x-y||.$$
Moreover $$||x-y|| \le ||\phi(x)-\phi(y)||.$$ Hence $$||x-y||\le ||\phi(x)-\phi(y)||\le (1+C)||x-y||.$$
\end{proof}

\begin{theo}
Let $X\subset \C^n,Y\subset \C^m$ be affine algebraic varieties and let $f: X\to Y$ be an algebraic bi-Lipschitz homeomorphism. Then
${\rm deg} \ X= {\rm deg }\ Y.$
\end{theo}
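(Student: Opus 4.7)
The plan is to interpose the graph $\Gamma := \mathrm{graph}(f) \subset \C^n \times \C^m$ between $X$ and $Y$, and to verify that the two coordinate projections of $\C^{n+m}$ restrict to degree-preserving maps $\Gamma \to X$ and $\Gamma \to Y$. Since $f$ is bi-Lipschitz and algebraic, so is $f^{-1}$; applying Lemma \ref{wykres} to each direction shows that the projections $\pi_X : \Gamma \to X$, $(x,y) \mapsto x$, and $\pi_Y : \Gamma \to Y$, $(x,y) \mapsto y$, are both algebraic bi-Lipschitz homeomorphisms.

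These projections are linear projections of $\C^{n+m}$ with projective centers $L_X = \{0\} \times \mathbb{P}^{m-1}$ and $L_Y = \mathbb{P}^{n-1} \times \{0\}$ sitting in the hyperplane $\pi_\infty \subset \mathbb{P}^{n+m}$ at infinity. Lemma \ref{lemat} then forces $L_X \cap \Sigma(\Gamma) = L_Y \cap \Sigma(\Gamma) = \emptyset$, where $\Sigma(\Gamma)$ is the closure of the set of secant directions of $\Gamma$. The key additional observation to establish is the inclusion $\overline{\Gamma} \cap \pi_\infty \subseteq \Sigma(\Gamma)$: any point $p$ at infinity of $\overline{\Gamma}$ is the projective limit of an unbounded sequence $z_n \in \Gamma$, and fixing any $w_0 \in \Gamma$, the secant directions $(z_n - w_0)/\|z_n - w_0\|$ have the same limit, since $\|z_n\|/\|z_n - w_0\| \to 1$. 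Combined with $L_X, L_Y \subset \pi_\infty$, this yields $L_X \cap \overline{\Gamma} = L_Y \cap \overline{\Gamma} = \emptyset$.

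Consequently $\pi_X$ and $\pi_Y$ extend to everywhere-defined linear projections on $\overline{\Gamma}$, producing finite morphisms $\overline{\Gamma} \to \overline{X}$ and $\overline{\Gamma} \to \overline{Y}$ of projective varieties of the same dimension. Each is injective on the Zariski-dense open subset $\Gamma$, hence of degree one, i.e., birational. For a birational finite morphism between projective varieties of equal dimension, the degrees of source and image coincide, so $\deg X = \deg \Gamma = \deg Y$. The main technical obstacle is establishing the inclusion $\overline{\Gamma} \cap \pi_\infty \subseteq \Sigma(\Gamma)$, which is precisely what converts the metric information supplied by Lemma \ref{lemat} into the algebro-geometric disjointness needed to deduce that the linear projections preserve the degree.
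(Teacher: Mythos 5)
Your proof is correct and follows essentially the same route as the paper: interpose the graph $\Gamma$, use Lemma \ref{wykres} and Lemma \ref{lemat} to show the closure of the secant directions misses both projection centers, deduce $\overline{\Gamma}\cap S_i=\emptyset$ from $\overline{\Gamma}\setminus\Gamma\subset\Sigma$, and conclude that both linear projections preserve the degree. You even supply an explicit justification of the inclusion $\overline{\Gamma}\cap\pi_\infty\subseteq\Sigma$ that the paper merely asserts, and your final step (a finite birational linear projection preserves degree) is just a repackaging of the paper's count $\#(\langle S_1,L\rangle\cap\Gamma)=\deg\Gamma=\deg X$.
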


\begin{proof}  Let $X\subset \C^n$ and Let $f: X\to Y$ be an algebraic bi-Lipschitz homeomorphism. Denote by $\Gamma\subset \C^n\times\C^n$ the graph of $f.$ By Lemma \ref{wykres} projections $\pi_X:\Gamma\to X$ and $\pi_Y:\Gamma\to Y$ are algebraic bi-Lipschitz homeomorphism. Let
$\pi_1: \C^n\times \C^m\to \C^n$ and $\pi_2: \C^n\times \C^m\to \C^m$ and denote by $S_1,S_2\subset \pi_\infty=\C\Bbb P^{m+n-1}$ centers of these projections. Denote by $\Lambda\subset \pi_\infty$ the set of directions of all secants of $\Gamma$ and let $\Sigma= cl(\Lambda)$.
Since $\pi_1|_\Gamma=\pi_X$ and $\pi_2|_\Gamma=\pi_Y$ we have by Lemma \ref{lemat} that $\Sigma \cap S_1=\Sigma\cap S_2=\emptyset.$ 
But $\overline{\Gamma}\setminus \Gamma\subset \Sigma$ and consequently $\overline{\Gamma}\cap S_i=\emptyset$ for $i=1,2.$ Now let $L\subset \C^n$ be a generic linear subspace of dimension $k={\rm codim } \ X.$ Hence $\#(L\cap X)={\rm deg} \ X$ and $L$ has not common points with $X$ at infinity.
Since $\overline{\Gamma}\cap S_1=\emptyset$ we see that $\#(<S_1,L>\cap\ \Gamma)={\rm deg}\ \Gamma$ where  by $<S_1,L>$
we mean a linear (projective) subspace spanned by $L$ and $S_1.$ However the mapping $\pi_X$ is a bijection, hence $\#(<S_1,L>\cap\ \Gamma)={\rm deg}\ X.$ In particular ${\rm deg}\ \Gamma={\rm deg}\ X.$ In the same way ${\rm deg}\ \Gamma={\rm deg}\ Y.$ Hence  ${\rm deg}\ X={\rm deg} \ Y.$
\end{proof}

\begin{re}
{\rm In fact a more general statement  is true. We can say after \cite{bfs} that the mapping $f: X\to Y$ is bi-Lipschitz at infinity if there exist compact sets $K,K'$ such that the mapping $f': X\setminus K\ni x\mapsto f(x)\in Y\setminus K'$ is bi-Lipschitz. It is easy to see that our proof works if $f$ is an algebraic homeomorphism, which is bi-Lipschitz at infinity. Indeed under this assumption we still have $\overline{\Gamma}\setminus \Gamma \subset \Sigma$, where $\Gamma=graph(f)$ and $\Sigma$ is the set of directions of secants of $graph(f').$ Moreover, there are sufficiently general linear subspaces which omit $K$ or $K'.$ Hence in fact we can state:

\begin{theo}
Let $X\subset \C^n,Y\subset \C^m$ be affine algebraic varieties and let $f: X\to Y$ be a birational correspondence. Assume that there exist compact sets $K,K'$ such that $f: X\setminus K\to Y\setminus K'$ is defined (as continuous mapping) and bi-Lipschitz. Then
${\rm deg} \ X= {\rm deg }\ Y.$
\end{theo}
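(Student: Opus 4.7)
The plan is to follow the proof of Theorem~3.4, substituting the graph of the bi-Lipschitz restriction $f':=f|_{X\setminus K}$ for the graph of $f$ wherever the bi-Lipschitz hypothesis is invoked. Let $\Gamma\subset X\times Y$ denote the graph of $f$ as a birational correspondence (an algebraic variety birational to both $X$ and $Y$ via the projections $\pi_X,\pi_Y$), and let $\Gamma':=\mathrm{graph}(f')\subset\Gamma$. By Lemma~\ref{wykres} applied to $f'$ and to its inverse, both $\pi_X|_{\Gamma'}$ and $\pi_Y|_{\Gamma'}$ are bi-Lipschitz, so Lemma~\ref{lemat} yields $\Sigma\cap S_i=\emptyset$ for $i=1,2$, where $\Sigma\subset\pi_\infty$ is the closure of the set of secant directions of $\Gamma'$ and $S_1,S_2$ are the centers of the coordinate projections $\pi_1,\pi_2$.

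The critical new step is to upgrade this to $\overline\Gamma\cap S_i=\emptyset$, for which I would prove $\overline\Gamma\cap\pi_\infty\subset\Sigma$. The key point is that $\Gamma\setminus\Gamma'$ projects under $\pi_X$ into $K\cap X$, a compact analytic subset of $X$ that contains no positive-dimensional irreducible component; consequently $\Gamma'$ is Zariski-dense in $\Gamma$, and the projective closures coincide, $\overline{\Gamma'}=\overline\Gamma$ in $\C\Bbb P^{n+m}$. Every $p\in\overline\Gamma\cap\pi_\infty$ is then the Euclidean limit of a sequence $p_n\in\Gamma'$ with $\|p_n\|\to\infty$; fixing any $q\in\Gamma'$, the secants $[p_n-q]$ of $\Gamma'$ have directions tending to $[p]$, so $[p]\in\Sigma$. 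Combined with $\Gamma\subset\C^{n+m}$ and $S_i\subset\pi_\infty$, this yields $\overline\Gamma\cap S_i=\emptyset$ for $i=1,2$, exactly as in the proof of Theorem~3.4.

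It remains to run the Bezout count, and here one further genericity check is required, because $\pi_X:\Gamma\to X$ need not be bijective everywhere. Let $B\subset X$ be the locus where $\pi_X$ fails to be bijective; since $\pi_X$ is bijective on $\Gamma'\to X\setminus K$ and $B$ is a closed algebraic subvariety of $X$, we have $B\subset K\cap X$, so $B$ is a bounded closed algebraic subset of $\C^n$, hence finite. A generic affine subspace $L\subset\C^n$ of dimension $\mathrm{codim}\,X$ can therefore be chosen with $L\cap X\cap B=\emptyset$, $|L\cap X|=\deg X$, and $\overline L\cap\overline X\cap\pi_\infty=\emptyset$; Bezout in $\C\Bbb P^{n+m}$ together with $\overline\Gamma\cap S_1=\emptyset$ then gives $\deg\overline\Gamma=|L\cap X|=\deg X$, and the symmetric argument with a generic $L'\subset\C^m$ avoiding the analogous finite locus in $Y$ yields $\deg\overline\Gamma=\deg Y$, so $\deg X=\deg Y$. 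The main obstacle is the density argument of the second paragraph together with verifying the finiteness of $B$; both ultimately rest on the fact that a bounded closed algebraic subvariety of $\C^n$ is necessarily zero-dimensional.
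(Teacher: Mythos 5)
Your proposal takes essentially the same route as the paper's own justification (given in the Remark preceding the statement): rerun the proof of Theorem~3.4 with $\Sigma$ the closure of the secant directions of $\mathrm{graph}(f')$, show $\overline{\Gamma}\setminus\Gamma\subset\Sigma$, and repeat the Bezout count with a linear space chosen compatibly with the compact sets. The overall strategy is right, but two of the supporting claims you add are not correct as stated.

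First, the assertion that the non-bijectivity locus $B$ of $\pi_X\colon\Gamma\to X$ is a closed algebraic set contained in $K\cap X$ (hence finite) does not follow from the bijectivity of $\pi_X|_{\Gamma'}\colon\Gamma'\to X\setminus K$: the algebraic set $\Gamma$ may contain points over $X\setminus K$ that are not on $\Gamma'$ (the birational correspondence can be multivalued, or have positive-dimensional fibres, at points of $X\setminus K$ outside the isomorphism locus of $\pi_X$), and $K\cap X$ is merely compact, not analytic, so ``contains no positive-dimensional irreducible component'' is not meaningful for it. This slip is harmless, because the detour is unnecessary: since $\pi_X$ is birational, $B$ is contained in a proper Zariski-closed subset of $X$, and a generic $L$ of complementary dimension avoids any such subset --- which is all the Bezout count needs. (The paper instead arranges $L\cap X\cap K=\emptyset$ directly; this is possible because the planes meeting the compact set $X\cap K$ form a closed proper subset of the relevant Grassmannian.)

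Second, the step ``the projective closures coincide, hence every $p\in\overline{\Gamma}\cap\pi_\infty$ is a Euclidean limit of points of $\Gamma'$'' identifies the Zariski closure of $\Gamma'$ with its Euclidean closure. Since $\Gamma'$ is not constructible (its definition involves the compact set $K$), this is not automatic: one must exclude sequences in $\Gamma$ tending to infinity whose $X$-coordinates remain in the bounded set $K$, since their limit directions lie in $S_1$. This is precisely where the hypothesis that $f'$ is a bi-Lipschitz \emph{homeomorphism onto} $Y\setminus K'$ with $K'$ compact, and not merely a Lipschitz injection, must enter: for $X=\C$, $Y=\{xy=1\}\subset\C^2$ and $f(t)=(t,1/t)$, the map is birational and bi-Lipschitz from $\{|t|>1\}$ onto its image, yet $\deg X=1\ne 2=\deg Y$; the theorem is saved only because that image is not the complement of a compact subset of $Y$. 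The paper's remark is equally terse here, but if you write the argument out in full, this is the step that genuinely consumes the hypotheses, and your proposal as written does not address it.
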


}
\end{re}

\begin{re}
{\rm 
It is worth to say, that affine cones $C(X),C(Y)$ mentioned in the introduction are birationally and bi-Lipschitz equivalent, but they have different degrees.}
\end{re}

 \section{Proof of the  Theorem 4.1} 

In a similar way we can prove:

\begin{theo}
Let $(X,0)\subset (\C^n,0), (Y,0)\subset (\C^m,0)$ be germs of analytic  sets and let $f: (X,0)\to (Y,0)$ be a c-holomorphic  and bi-Lipschitz homeomorphism.  Then
${\rm mult}_0 \ X= {\rm mult }_0 \ Y.$
\end{theo}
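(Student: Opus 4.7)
The plan is to adapt the strategy of Theorem~3.4 to the local setting, replacing the projective degree by the local multiplicity and replacing ``directions of secants at infinity'' by ``directions of secants whose endpoints cluster at $0$''. Let $\Gamma = \{(x, f(x)) : x \in X\} \subset \C^n \times \C^m$; since $f$ is c-holomorphic, $(\Gamma, 0)$ is an analytic germ. The coordinate projections $\pi_1, \pi_2$ restrict to $\pi_X : \Gamma \to X$ and $\pi_Y : \Gamma \to Y$. The proof of Lemma~\ref{wykres} is purely local and uses only that $f$ is Lipschitz, so the graph map $\phi : X \to \Gamma$, $x \mapsto (x, f(x))$, is a bi-Lipschitz germ and consequently so are $\pi_X$ and $\pi_Y$.

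Next I would establish the local analogue of Lemma~\ref{lemat}. For an analytic germ $(Z, 0) \subset (\C^N, 0)$, let $\Sigma_0(Z) \subset \Bbb P^{N-1}$ denote the closure of the set of directions $(x - y)/\|x - y\|$ with $x \ne y$ in $Z$, taken in an arbitrarily small punctured neighborhood of~$0$. The argument of Lemma~\ref{lemat} transfers: if a sequence of normalized secants $(x_j - y_j)/\|x_j - y_j\|$ witnesses a failure of the lower Lipschitz bound for a projection with center~$L$, its subsequential limit lies in $L \cap \Sigma_0(Z)$; conversely, if $L \cap \Sigma_0(Z) = \emptyset$ one obtains a uniform bi-Lipschitz estimate on a sufficiently small neighborhood. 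Applied to $Z = \Gamma$ together with the bi-Lipschitz nature of $\pi_X, \pi_Y$, this gives $S_1 \cap \Sigma_0(\Gamma) = \emptyset = S_2 \cap \Sigma_0(\Gamma)$, where $S_1 = \Bbb P(\{0\} \times \C^m)$ and $S_2 = \Bbb P(\C^n \times \{0\})$ are the centers of $\pi_1, \pi_2$. Since $\Bbb P(C_0 \Gamma) \subset \Sigma_0(\Gamma)$, it follows that $\Bbb P(C_0 \Gamma) \cap S_i = \emptyset$ for $i = 1, 2$.

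Finally I would compare multiplicities via a generic finite projection. Let $k = \dim \Gamma = \dim X = \dim Y$ and choose a generic $(n - k)$-dimensional linear subspace $H \subset \C^n$ transverse to $C_0 X$ at~$0$, so that the linear projection $p : \C^n \to \C^n / H \cong \C^k$ restricts to a finite map $p|_X$ of local degree ${\rm mult}_0 \ X$ at~$0$. Set $\tilde H = H \times \C^m \subset \C^n \times \C^m$ and let $\tilde p : (x, y) \mapsto p(x)$. I claim $\tilde H \cap C_0 \Gamma = \{0\}$: since $\pi_1(\Gamma) = X$ and $\pi_1$ is linear, $\pi_1(C_0 \Gamma) \subset C_0 X$, so any $(u, w) \in \tilde H \cap C_0 \Gamma$ forces $u \in H \cap C_0 X = \{0\}$; and if $(0, w) \in C_0 \Gamma$ with $w \ne 0$, a sequence $(x_j, f(x_j)) \to 0$ with normalized limit $(0, w)/\|(0, w)\|$ would give $\|x_j\|/\|(x_j, f(x_j))\| \to 0$, contradicting the Lipschitz bound $\|(x_j, f(x_j))\| \le (1 + L)\|x_j\|$. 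Hence $\tilde p|_\Gamma$ is finite at~$0$ of local degree ${\rm mult}_0 \ \Gamma$. Since $\phi$ is a bijection intertwining $p|_X$ with $\tilde p|_\Gamma$ fiber-by-fiber, ${\rm mult}_0 \ \Gamma = {\rm mult}_0 \ X$; the symmetric argument with $Y$ yields ${\rm mult}_0 \ \Gamma = {\rm mult}_0 \ Y$, and the theorem follows.

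The principal technical point is the local version of Lemma~\ref{lemat}: one must check that the inductive argument over one-dimensional projections carries over when the secants are constrained to have both endpoints in an arbitrarily small punctured neighborhood of~$0$, rather than arbitrary in the affine set. No essentially new ideas seem required, but the bookkeeping is the main place where care is needed; the Lipschitz hypothesis also plays a direct role in forcing $C_0 \Gamma$ away from the fiber direction $\{0\} \times \C^m$, independently of the local Lemma~\ref{lemat}.
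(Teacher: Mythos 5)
Your proposal follows essentially the same route as the paper's proof: pass to the graph $\Gamma$ of $f$, use Lemma \ref{wykres} to see that both coordinate projections restrict to bi-Lipschitz homeomorphisms on $\Gamma$, invoke a local version of Lemma \ref{lemat} to conclude that the closure of the secant directions (hence the tangent cone $C_0\Gamma$) misses the centers $S_1,S_2$, and then compute ${\rm mult}_0\,\Gamma={\rm mult}_0\,X={\rm mult}_0\,Y$ by counting points of a generic linear section, which is exactly the paper's argument phrased as a generic finite projection. The extra direct verification that $C_0\Gamma\cap(\{0\}\times\C^m)=\{0\}$ via the Lipschitz bound is a nice supplementary detail but does not change the structure of the argument.
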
.

\begin{proof}
Let $U,V$ be small neighborhoods of $0$ in $\C^n$ and $\C^m$ such that the mapping $f:U\cap X=X'\to V\cap Y=Y'$ is defined and it is by-Lipschitz.
Denote by $\Gamma\subset U\times V$ the graph of $f.$ By Lemma \ref{wykres} projections $\pi_{X'}:\Gamma\to X'$ and $\pi_{Y'}:\Gamma\to Y'$ are   bi-Lipschitz homeomorphism. Let
$\pi_1: \C^n\times \C^m\to \C^n$ and $\pi_2: \C^n\times \C^m\to \C^m$ and denote by $S_1,S_2\subset \pi_\infty=\C\Bbb P^{m+n-1}$ centers of these projections. Denote by $\Lambda\subset \pi_\infty$ the set of directions of all secants of $\Gamma$ and let $\Sigma= cl(\Lambda)$.
Since $\pi_1|_\Gamma=\pi_X$ and $\pi_2|_\Gamma=\pi_Y$ we have by Lemma \ref{lemat} that $\Sigma \cap S_1=\Sigma\cap S_2=\emptyset.$ 
But $\overline{C(0,\Gamma)}\setminus C(0,\Gamma)\subset \Sigma$ and consequently $\overline{C(0,\Gamma)}\cap S_i=\emptyset$ for $i=1,2.$ Now let $L\subset \C^n$ be a generic linear subspace of dimension $k={\rm codim } \ X.$ Hence $\#(L\cap X')={\rm mult}_0 X$ and $L$ has not common points with $C(0,X)$ at infinity (we can shrink $U,V$ if necessary!).
Since $\overline{C(0,\Gamma)}\cap S_1=\emptyset$ we see that $\#(<S_1,L>\cap\ \Gamma)={\rm mult}_0\ \Gamma$ where  by $<S_1,L>$
we mean a linear (projective) subspace spanned by $L$ and $S_1.$ However the mapping $\pi_{X'}$ is a bijection, hence $\#(<S_1,L>\cap\ \Gamma)={\rm mult}_0\ X.$ In particular ${\rm mult}_0\ \Gamma={\rm mult}_0\ X.$ In the same way ${\rm mult}_0\ \Gamma={\rm mult}_0\ Y.$ Hence  ${\rm mult}_0\ X={\rm mult}_0 \ Y.$
\end{proof}

\begin{co}
Let $X\subset \C^n,Y\subset \C^m$ be affine algebraic varieties and let $f: X\to Y$ be an algebraic bi-Lipschitz homeomorphism. Then
${\rm deg} \ X= {\rm deg }\ Y$ and for every $x\in X$ we have ${\rm mult}_x \ X= {\rm mult }_{f(x)} \ Y.$
\end{co}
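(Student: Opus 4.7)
The plan is to derive the corollary by simply combining Theorem 3.4 and Theorem 4.1, once I have checked that an algebraic bi-Lipschitz homeomorphism restricts, at each point, to a c-holomorphic bi-Lipschitz germ map. The first equality ${\rm deg}\ X = {\rm deg}\ Y$ is literally the conclusion of Theorem 3.4 applied to $f$, so nothing more needs to be said about it. The new content is the pointwise multiplicity statement ${\rm mult}_x\ X = {\rm mult}_{f(x)}\ Y$, and I reduce it to an application of Theorem 4.1 one point at a time.

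Fix a point $x \in X$ and set $y = f(x) \in Y$. After translation I may assume $x = 0$ and $y = 0$. I would choose small Euclidean neighborhoods $U \ni 0$ in $\C^n$ and $V \ni 0$ in $\C^m$ with $f(U \cap X) \subseteq V \cap Y$. The restricted map $f \colon (X,0) \to (Y,0)$ is bi-Lipschitz because the global map is, so the only thing to check is that it is c-holomorphic in the sense of the introduction, i.e.\ that it is continuous and has an analytic graph on these germs. Continuity is immediate, and the graph $\Gamma$ of the global map is an affine algebraic variety in $\C^n \times \C^m$ by hypothesis, so $\Gamma \cap (U \times V)$ is an analytic subset of $U \times V$. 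This analytic set is exactly the graph of the germ map, which is therefore c-holomorphic.

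Now Theorem 4.1, applied to the analytic germs $(X,0) \subset (\C^n,0)$ and $(Y,0) \subset (\C^m,0)$ together with this restricted bi-Lipschitz c-holomorphic homeomorphism, yields ${\rm mult}_0\ X = {\rm mult}_0\ Y$, which after undoing the translation reads ${\rm mult}_x\ X = {\rm mult}_{f(x)}\ Y$. Since $x \in X$ was arbitrary, this holds at every point.

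I do not expect a genuine obstacle: the entire argument is a bookkeeping assembly of Theorems 3.4 and 4.1. The one place to be mildly careful is the verification that an algebraic graph really does produce an analytic graph on the germ level, so that the c-holomorphic hypothesis of Theorem 4.1 is honestly satisfied; but this is just the standard fact that algebraic sets are analytic, intersected with a neighborhood.
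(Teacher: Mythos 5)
Your proposal is correct and is exactly the argument the paper intends: the paper states the corollary without proof as an immediate combination of Theorems 3.4 (for the degree) and 4.1 (applied pointwise after translating $x$ and $f(x)$ to the origin), and your verification that the algebraic graph restricts to an analytic graph, making the germ map c-holomorphic, is the only detail worth recording.
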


\section{Normality is not a bi-Lipschitz invariant}

In \cite{bfls} (see also \cite{s}) the authors proved that the smoothness of an analytic space is a bi-Lipschitz invariant. The natural question 
is whether the normality is also a bi-Lipschitz invariant. Directly from \cite{bfls} and \cite{s} we have:

\begin{theo}\label{normal}
Let $(X,0), (Y,0)$ be analytic hypersurfaces in $(\C^n,0).$ If they are bi-Lipschitz equivalent and $X$ is normal , then also $Y$ is normal.
\end{theo}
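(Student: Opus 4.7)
The plan is to combine Serre's normality criterion for hypersurfaces with the bi-Lipschitz invariance of smoothness proved in \cite{bfls,s}. Since $X$ and $Y$ are hypersurfaces in $(\C^n,0)$, they are automatically Cohen-Macaulay, so Serre's condition $S_2$ holds. Thus, by Serre's criterion, normality is equivalent to $R_1$, i.e.\ to the condition that the singular locus has complex codimension at least $2$. Concretely, $Y$ is normal at $0$ if and only if $\dim_\C \mathrm{Sing}(Y,0)\leq n-3$, and similarly for $X$.

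Let $\phi\colon(X,0)\to(Y,0)$ denote the given bi-Lipschitz homeomorphism, realised by a bi-Lipschitz map on an open neighbourhood of $0$ in $X$. For every point $p$ close enough to $0$, the restriction of $\phi$ near $p$ yields a bi-Lipschitz equivalence of germs $(X,p)\cong(Y,\phi(p))$. The main result of \cite{bfls,s} says that smoothness of an analytic germ is a bi-Lipschitz invariant, so $(X,p)$ is smooth if and only if $(Y,\phi(p))$ is smooth. Consequently $\phi$ restricts to a homeomorphism of the singular loci $\mathrm{Sing}(X,0)$ and $\mathrm{Sing}(Y,0)$.

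Next, I would invoke the fact that bi-Lipschitz homeomorphisms preserve Hausdorff dimension. Since $X$ is normal, Serre's criterion gives $\dim_\C\mathrm{Sing}(X,0)\leq n-3$, and hence the real Hausdorff dimension of $\mathrm{Sing}(X,0)$ is at most $2(n-3)$. Therefore $\mathrm{Sing}(Y,0)=\phi(\mathrm{Sing}(X,0))$ also has real Hausdorff dimension at most $2(n-3)$. But $\mathrm{Sing}(Y,0)$ is a complex analytic set, and for such sets real Hausdorff dimension equals twice the complex dimension; thus $\dim_\C\mathrm{Sing}(Y,0)\leq n-3$. Applying Serre's criterion to the hypersurface $Y$ then forces $Y$ to be normal at $0$.

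The main conceptual point, rather than a serious obstacle, is checking that the invariance of smoothness from \cite{bfls,s} can be applied at every point in a small neighbourhood of $0$, not just at the distinguished base point, so that equality of singular loci under $\phi$ really holds. Once this ingredient is granted, the argument reduces to the hypersurface case of Serre's criterion combined with invariance of Hausdorff dimension, both of which are standard.
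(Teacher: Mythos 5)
Your proof is correct and follows essentially the same route as the paper: the paper's (very terse) argument is exactly that a hypersurface is normal iff it is smooth in codimension one, combined with the bi-Lipschitz invariance of smoothness from \cite{bfls} and \cite{s}. You have merely filled in the details the paper leaves implicit (Serre's criterion via Cohen--Macaulayness, pointwise application of smoothness invariance, and preservation of the dimension of the singular locus via Hausdorff dimension).
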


\begin{proof}
Indeed, such a hypersurface is normal if it is smooth in codimension one. Hence our result follows directly from \cite{bfls} and \cite{s}.
\end{proof}

In this section we show that in general Theorem \ref{normal} does not hold. We start with:

\begin{lem}\label{l1}
Let $X\Bbb P^{n}$ be a projective variety, which is not contained in any hyperplane. Let $C(X)\subset\C^{n+1}$ be a cone over $X.$ Then
${\rm dim}\ T_0(C(X))=n+1.$
\end{lem}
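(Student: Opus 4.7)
The plan is to compute $T_0(C(X))$ directly from its definition and observe that the hypothesis ``$X$ is not contained in any hyperplane'' forces no linear constraints to appear at the origin. Recall that for an affine algebraic set $V = V(I) \subset \C^{N}$, the Zariski tangent space at a point $p \in V$ is the common zero locus of the linear parts at $p$ of a (homogeneous) generating set of $I$.

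First I would translate the hypothesis into a statement about the defining ideal. The cone $C(X) \subset \C^{n+1}$ is cut out by the homogeneous ideal $I(X) \subset \C[x_0,\ldots,x_n]$. Saying that $X$ is not contained in any hyperplane of $\Bbb P^n$ is equivalent to saying that no nonzero linear form $\ell \in \C[x_0,\ldots,x_n]_1$ vanishes identically on $X$; equivalently, $I(X) \cap \C[x_0,\ldots,x_n]_1 = 0$. Since $I(X)$ is homogeneous, every minimal homogeneous generator of $I(X) = I(C(X))$ must then have degree at least $2$.

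Next I would observe that for any polynomial $f$ whose monomials all have degree $\geq 2$, each partial derivative $\partial f / \partial x_i$ has no constant term, so $d_0 f = 0$. Applying this to a homogeneous generating set of $I(C(X))$ shows that the Jacobian at $0$ of the defining equations of $C(X)$ is the zero matrix; its kernel is therefore all of $\C^{n+1}$, so $T_0(C(X)) = \C^{n+1}$, giving $\dim T_0(C(X)) = n+1$.

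There is no substantive obstacle here: the whole argument is an immediate unpacking of the definition of the Zariski tangent space, combined with the standard dictionary identifying hyperplanes of $\Bbb P^n$ containing $X$ with nonzero linear forms in $I(X)$. The only mildly delicate point is making sure one uses a \emph{homogeneous} generating set of $I(C(X))$, so that the degree bound $\geq 2$ can be applied generator by generator.
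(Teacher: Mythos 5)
Your proof is correct, and it takes a genuinely different route from the paper's. You argue algebraically: the vanishing ideal of the affine cone is the homogeneous ideal $I(X)\subset\C[x_0,\dots,x_n]$, nondegeneracy of $X$ says exactly that $I(X)_1=0$ (and $I(X)_0=0$ since $X\neq\emptyset$), so every homogeneous generator has degree at least $2$, every differential at the origin vanishes, and the Zariski tangent space is all of $\C^{n+1}$. Your cautionary remark about using a \emph{homogeneous} generating set is the right one, and in fact the argument is even a little more robust than you state: since $I(X)$ is a homogeneous ideal with no components in degrees $0$ and $1$, \emph{every} element of $I(X)$ (not just the chosen generators) has zero linear part at $0$, so the conclusion does not depend on the choice of generators. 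The paper instead argues geometrically: it uses the characterization of $\dim T_0$ as the minimal dimension of a smooth germ containing $(C(X),0)$, notes that the tangent cone of $C(X)$ at $0$ is $C(X)$ itself (it is a cone with vertex $0$), hence $C(X)$ sits inside the tangent space of any such smooth germ, and then observes that a linear subspace of $\C^{n+1}$ containing $C(X)$ meets the hyperplane at infinity in a linear space containing the nondegenerate variety $X$, forcing it to be everything. Your version is more elementary and self-contained, reducing the lemma to the standard dictionary between hyperplanes containing $X$ and linear forms in $I(X)$; the paper's version is coordinate-free and isolates the geometric mechanism (tangent cone $\subset$ tangent space, plus nondegeneracy at infinity), which is closer in spirit to how the lemma is used later. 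Both are complete proofs.
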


\begin{proof}
Note that $T_0(C(X))$ is the dimension of a minimal smooth germ $Y_0$ which contains the germ  $C(X)_0.$ However since $C(X)_0\subset Y_0$ we have that $C(X)\subset T_0 Y.$ Denote by $\pi\cong \Bbb P^{n}$ the hyperplane at infinity. By the assumption we have $\overline{T_0 Y}\cap \pi=\pi.$ Hence $T_0 Y=\C^{n+1}.$
\end{proof}

\begin{theo}
For every $r>1$ there is a normal affine algebraic variety $X^r$ of dimension $r$ and an algebraic bi-Lipschitz homeomorphism 
$\phi: X^r\to Y^r$ such that $Y^r$ is not normal. 
\end{theo}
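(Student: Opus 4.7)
The strategy is to take $X^{r}$ to be an affine cone over a projectively normal embedding whose ambient projective space is ``too big'', and then compress it into $\C^{2r+1}$ via Theorem \ref{proj}; a tangent-space dimension comparison will then force the image $Y^{r}$ to be non-normal.

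Concretely, fix $r\ge 2$ and an integer $n\ge 2r+1$.  I choose a projectively normal embedding $Z\subset\C\Bbb P^{n}$ of a smooth projective variety of dimension $r-1$ whose image is not contained in any hyperplane -- for instance a Veronese embedding of $\C\Bbb P^{r-1}$ of sufficiently high degree $d$, so that $\binom{r-1+d}{d}-1\ge 2r+1$.  Set $X^{r}:=C(Z)\subset\C^{n+1}$; projective normality of $Z$ makes $X^{r}$ a normal affine variety of dimension $r$.  Theorem \ref{proj} then supplies an algebraic bi-Lipschitz embedding $\phi:X^{r}\to\C^{2r+1}$, and I put $Y^{r}:=\phi(X^{r})$.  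By construction $\phi:X^{r}\to Y^{r}$ is an algebraic bi-Lipschitz homeomorphism.

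To establish that $Y^{r}$ is not normal, I will identify $\phi$ with the normalisation of $Y^{r}$.  Let $\Gamma:={\rm graph}(\phi)\subset X^{r}\times Y^{r}$, with projections $\pi_{X}:\Gamma\to X^{r}$ and $\pi_{Y}:\Gamma\to Y^{r}$.  Both are bijective regular maps because $\phi$ is a homeomorphism.  Normality of $X^{r}$ together with the Zariski Main Theorem (as in the proof of Proposition \ref{alg}) makes $\pi_{X}$ an isomorphism, so $\Gamma$ itself is normal; then $\pi_{Y}$ is a finite, birational, bijective morphism from a normal variety onto $Y^{r}$, i.e.\ the normalisation morphism of $Y^{r}$.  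Since $\pi_{Y}$ is bijective, $Y^{r}$ is unibranch everywhere, and consequently $Y^{r}$ is normal at a point $y$ iff $\pi_{Y}$ is an isomorphism of germs at $\pi_{Y}^{-1}(y)$.

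Finally, I rule this out at the vertex.  By Lemma \ref{l1} one has $\dim T_{0}X^{r}=n+1\ge 2r+2$, whereas $Y^{r}\subset\C^{2r+1}$ forces $\dim T_{\phi(0)}Y^{r}\le 2r+1$.  Identifying $\Gamma$ with $X^{r}$ via $\pi_{X}$, the differential of $\pi_{Y}$ at $(0,\phi(0))$ would have to identify tangent spaces of different dimensions, which is impossible; hence $Y^{r}$ is not normal at $\phi(0)$, and the construction is complete.  The main delicate step is the normalisation identification -- recognising $\phi$ as realising the normalisation of $Y^{r}$, so that local non-isomorphism translates into non-normality; once that is in hand, the embedding-dimension gap provided by Lemma \ref{l1} together with the bound from Theorem \ref{proj} finishes the argument.
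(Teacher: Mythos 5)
Your proposal is correct and takes essentially the same approach as the paper: a cone over a high-degree Veronese variety (projectively normal, hence normal, with $\dim T_0 = n+1 > 2r+1$ by Lemma \ref{l1}), pushed into $\C^{2r+1}$ by the algebraic bi-Lipschitz embedding of Theorem \ref{proj}, with non-normality of the image forced by the Zariski Main Theorem and the tangent-space dimension gap at the vertex. Your version is slightly more careful in two harmless respects: you cone over $\C\Bbb P^{r-1}$ so that $X^r$ really has dimension $r$, and you spell out the identification of $\phi$ with the normalisation morphism, which the paper leaves implicit.
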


\begin{proof}
Let us take $d$-tuple embedding $\phi_d$ of $\Bbb P^{r}$ to $\Bbb P^N(d).$ Denote $A_d=\phi_d(\Bbb P^r).$ Let us note 
that  the space $A_d\subset \Bbb P^N(d)$ is projectively normal i.e., the mapping $$\Gamma(\Bbb P^N(d),{\mathcal O}_{\Bbb P^N(d)} (k))\to \Gamma(A_d,{\mathcal O}_X (k))$$ is surjective for every $k$. This means that the cone $X:=C(A_d)$ is a normal space  (for details see \cite{har}, p.126, 5.14). Moreover it is easy to check that $A_d$ is not contained in any linear subspace of $\Bbb P^N(d).$ Hence by Lemma \ref{l1} we have dim $T_0 X=N(d)+1.$  We can take $d$ so large  that $N:=N(d)>2r+1.$ Now consider a generic projection $\pi : \C^N\to\C^{2r+1}.$ We know by \cite{bfj} that $\pi$ restricted to $X_d$ is by-Lipschitz embedding. Let $Y=\pi(X).$ The variety $Y$ is not normal. Indeed, otherwise by the Zariski Main Theorem the mapping $\pi|X: X\to Y$ has to be an isomorphism , in particular dim $T_0 X=$dim $T_0 Y$. Since dim $T_0 Y\le 2r+1<N$ it is a contradiction.
\end{proof} 

\vspace{10mm}

    \end{document}